\newtheorem{thm}{Theorem}
\newtheorem{lem}[thm]{Lemma}
\newcommand{\N}{{\mathbb N}}
\def\1{{\bf 1}}
\def\N{\mathbb N}
\title{Does genetic diversity help survival?
\footnote{Keywords: probability model, genetic diversity, demography, population biology.}}
\begin{document}

\maketitle

Luiz Renato Fontes$^1$, Fabio P. Machado \footnote{Instituto de Matemática e Estatística, Universidade de São Paulo, Rua do Matão 1010, 05508-090 São Paulo SP Brasil
E-mail: lrfontes@usp.br and fmachado@ime.usp.br} and Rinaldo B. Schinazi\footnote{Department of Mathematics, University of Colorado, Colorado Springs, CO 80933-7150, USA.
E-mail: rinaldo.schinazi@uccs.edu}

\begin{abstract}
    We introduce the following model for the evolution of a population. At every discrete time $j\geq 0$ exactly one individual is introduced in the population and is assigned a death probability $c_j$ sampled from $C$, a fixed probability distribution. We think of $c_j$ as a genetic marker of this individual. At every time $n\geq 1$ every individual in the population dies or not independently of each other with its corresponding death probability $c_j$. We show that the population size goes to infinity if and only if $E(1/C)=\infty$. This is in sharp contrast with the model with constant $c$ and with the model in random environment (same random $c_n$ for all individuals at time $n$). Both of these models are always positive recurrent. Thus, genetic diversity does seem to help survival! We also study the point process associated with our model. We show that the limit point process has an accumulation point near 0 for the $c'$s. For certain $C$ distributions, including the uniform, the limit process properly rescaled is also shown to converge to a non-homogeneous Poisson process.
\end{abstract}
\section{The model}

Consider a population with the following dynamics. At every discrete time $j\geq 0$ exactly one individual is introduced in a population and to this individual is assigned a death probability $c_j$, sampled from a random variable $C$, with support on $(0, 1)$. We think of $c_j$ as a genetic marker for that individual. At the same time that this individual is introduced, each individual present, except the one who has just entered, dies, regardless of all the others, with the death probability assigned to it at the time it was introduced into the population.

Let $i\geq 0$, we denote by $G_i$ the time that individual $i$ (i.e. the individual introduced at time $i$) survives. With our assumptions we see that $G_0,G_1,\dots,G_n$ are independent random variables with the following property. For all $i\geq 0$, the conditional distribution of $G_i$ given $c_i$ is a geometric random variable with parameter $c_i$.

\section{The population size}

For $n\geq 0$, let
$$A_n=\{(k,c_k):0\leq k\leq n\mbox{ and } G_k\geq n-k\}.$$
That is, the random set of points $A_n$ represents all the individuals alive at time $n$, starting with one individual at time $0$. 

A simple but critical observation is that $A_n$ has the same probability distribution as 
$$\Tilde A_n=\{(k,c_k):0\leq k\leq n\mbox{ and } G_k\geq k\}.$$
Denote by $|B|$ the cardinal of a set $B$.

\begin{thm} Assume that  $c_0,c_1,\dots$ are sampled from the same distribution as a fixed random variable $C$. 
\begin{enumerate}
    \item If $E(1/C)<+\infty$ then $(|A_n|)$ converges in distribution to a random variable which is a.s. finite.
    \item If $E(1/C)=+\infty$ then $\lim_{n\to\infty}|A_n|=+\infty$ in probability.
\end{enumerate}
\end{thm}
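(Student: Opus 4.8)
The plan is to exploit the distributional identity $A_n \stackrel{d}{=} \tilde A_n$ and thereby reduce the whole question to a sum of \emph{independent} Bernoulli variables. First I would write
$$|\tilde A_n| = \sum_{k=0}^n \mathbf{1}\{G_k \geq k\},$$
which is a sum of independent indicators, since $G_0,G_1,\dots$ are independent. Setting $p_k = P(G_k \geq k)$ and conditioning on $c_k$, the geometric tail gives $p_k = E\big[(1-C)^k\big]$ (the precise geometric convention only shifts the exponent by a constant and does not affect what follows). The central computation is then the summation of these probabilities: by Tonelli's theorem,
$$\sum_{k=0}^\infty p_k = \sum_{k=0}^\infty E\big[(1-C)^k\big] = E\Big[\sum_{k=0}^\infty (1-C)^k\Big] = E\Big[\frac{1}{C}\Big],$$
so the finiteness of $E(1/C)$ is \emph{exactly} the convergence versus divergence of $\sum_k p_k$.

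Next I would note that $|\tilde A_n|$ is nondecreasing in $n$, hence converges almost surely to $W := \sum_{k=0}^\infty \mathbf{1}\{G_k \geq k\}$, and then apply the two Borel--Cantelli lemmas to the independent events $\{G_k \geq k\}$. If $E(1/C)<\infty$ then $\sum_k p_k<\infty$, so by the first Borel--Cantelli lemma only finitely many of these events occur, whence $W$ is a.s.\ finite. If $E(1/C)=\infty$ then $\sum_k p_k=\infty$, and because the events are independent, the second Borel--Cantelli lemma forces infinitely many of them to occur, so $W=+\infty$ a.s.

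Finally I would transfer these conclusions to $|A_n|$ via the identity in distribution. For part (1), almost sure convergence $|\tilde A_n|\to W$ implies convergence in distribution, and since $|A_n|\stackrel{d}{=}|\tilde A_n|$ for each fixed $n$, also $|A_n|$ converges in distribution to the a.s.-finite limit $W$. For part (2), $|\tilde A_n|\to+\infty$ a.s.\ means $P(|\tilde A_n|\le M)\to 0$ for every $M$, and the distributional identity gives $P(|A_n|\le M)=P(|\tilde A_n|\le M)\to 0$, i.e.\ $|A_n|\to+\infty$ in probability.

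I expect the only genuine point of care to be this last step: one must not attempt to couple the processes $(A_n)$ and $(\tilde A_n)$ jointly, since the identity holds only marginally at each fixed $n$. The resolution is that both claimed modes of convergence (in distribution, and in probability to $+\infty$) depend only on the one-dimensional marginals, so the marginal identity $|A_n|\stackrel{d}{=}|\tilde A_n|$ is all that is required. The essential structural input is the independence that powers the second Borel--Cantelli lemma, which is what makes the divergence in part (2) sharp rather than merely giving a lower bound.
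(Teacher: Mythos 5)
Your proposal is correct and follows essentially the same route as the paper: reduce to the independent indicators $\mathbf{1}\{G_k\ge k\}$ in $|\tilde A_n|$, identify $\sum_k P(G_k\ge k)$ with $E(1/C)$, and apply the second Borel--Cantelli lemma in the divergent case. Your appeal to the first Borel--Cantelli lemma in part (1) is the paper's monotone-convergence-of-expectations argument in another guise, and your remark that only one-dimensional marginals are needed to transfer the conclusions from $|\tilde A_n|$ to $|A_n|$ is a correct, slightly more explicit rendering of the paper's final step.
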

\begin{proof}
We will show that the results hold almost surely for $(\tilde A_n)$. This will imply the results for $(A_n)$.
Observe that,
$$|\tilde A_n|=\sum_{k=0}^n {\bf 1}_{\{G_k\geq k\}}.$$
Since $G_0$ is a geometric random variable with support on the natural numbers, $G_0$ is larger than 0. Hence, 
$$|\tilde A_n|=1+\sum_{k=1}^{n} {\bf 1}_{\{G_k\geq k\}}.$$
Then,
\begin{align*}
E\left(|\tilde A_n|\right)=&1+\sum_{k=1}^{n} P(\{G_k\geq k\})\\
  =&1+\sum_{k=1}^{n} E\left((1-C)^{k-1}\right)\\
  =&1+E\left(\frac{1-(1-C)^{n}}{C}\right)\\
\end{align*}

By the Monotone Convergence Theorem,
$$\lim_{n\to\infty}E\left(|\tilde A_n|\right)=1+E(\frac{1}{C}).$$

Since the sequence $(|\tilde A_n|)_{n\geq 0}$ is increasing,
$$\lim_{n\to\infty}E\left(|\tilde A_n|\right)=E\left(\lim_{n\to\infty}|\tilde A_n|\right).$$

Thus, if $E(\frac{1}{C})<\infty$ then $\lim_{n\to\infty}|\tilde A_n|<+\infty$ a.s.
Therefore, $(|\tilde A_n|)$ converges in distribution to a random variable which is a.s. finite. Since $|A_n|$ has the same distribution of $|\tilde A_n|$ for every $n\geq 0$ the proof of the first part of the theorem is complete.

We now turn to the case $E(\frac{1}{C})=\infty$. Then,
$$\sum_{k=0}^{\infty} P(G_k\geq k)=+\infty.$$
Since the random variables $G_0,G_1,\dots$ are independent the second Borel-Cantelli Lemma applies. Therefore, a.s. there are infinitely many $k$'s such that $G_k\geq k$. Hence, 
$$\lim_{n\to\infty}|\tilde A_n|=1+\sum_{k=1}^{\infty} {\bf 1}_{\{G_k\geq k\}}$$
is infinite almost surely. Hence, $(|\tilde A_n|)$ goes to infinity in probability as well and so does 
$(|A_n|)$.
\end{proof}

\subsection{The particular case of a uniform distribution}

Assume in this subsection that the random variable $C$ is uniformly distributed on $(0,a)$ for some $a$ in $(0,1)$. Hence, $E(1/C)=+\infty$ and $|A_n|$ goes to infinity in probability. We will now estimate the rate at which $|A_n|$ goes to infinity.
\begin{align*}
E(|A_n|)=&1+\sum_{k=0}^{n-1} P(G_k\geq n-k)\\
=&1+\sum_{k=0}^{n-1} E\left ((1-C)^{n-k-1}\right)\\  
=&1+\sum_{k=0}^{n-1}\frac{1}{a}\int_0^a (1-x)^{n-k-1}dx\\
=&1+\sum_{k=0}^{n-1}\frac{1}{(n-k)a}\left(1-(1-a)^{n-k}\right)\\
\end{align*}
Hence,
$$E(|A_n|)=1+\frac{1}{a}\sum_{k=1}^{n} \frac{1}{k}-\frac{1}{a}\sum_{k=1}^{n}\frac{(1-a)^{k}}{k}.$$
Therefore, as $n$ goes to infinity $E(|A_n|)$ grows as $\frac{1}{a}\ln n$, see Figure 1. Moreover, we have the following result,
\begin{thm}
 Assume that the random variable $C$ is uniformly distributed on $(0,a)$ for some $a$ in $(0,1)$. Then,
 $\frac{|A_n|}{a\ln n}$ converges in probability to 1.
\end{thm}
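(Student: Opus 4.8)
The plan is to prove the statement by a first-and-second moment (concentration) argument, exploiting that $|A_n|$ is a sum of \emph{independent} indicators. Writing
$$|A_n|=\sum_{k=0}^{n}\mathbf 1_{\{G_k\ge n-k\}},$$
the summands are independent because $G_0,\dots,G_n$ are, so both moments can be handled term by term. The first moment is already in hand from the computation preceding the theorem:
$$E(|A_n|)=1+\frac1a\sum_{k=1}^n\frac1k-\frac1a\sum_{k=1}^n\frac{(1-a)^k}{k},$$
and since the second sum converges while $\sum_{k\le n}1/k=\ln n+O(1)$, one gets $E(|A_n|)\sim a^{-1}\ln n$. This fixes the scaling: the normalization dictated by the mean is $a^{-1}\ln n$, so I read the displayed $a\ln n$ as a typographical slip for $a^{-1}\ln n$ and prove $a|A_n|/\ln n\to1$ in probability, which is the assertion of the theorem with this (forced) normalization.

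Next I would bound the variance. By independence,
$$\mathrm{Var}(|A_n|)=\sum_{k=0}^n p_k(1-p_k)\le\sum_{k=0}^n p_k=E(|A_n|)\sim a^{-1}\ln n,$$
where $p_k=P(G_k\ge n-k)$. The decisive observation is that $\mathrm{Var}(|A_n|)$ grows only like $\ln n$, whereas $(E(|A_n|))^2$ grows like $a^{-2}(\ln n)^2$; the fluctuations are therefore of smaller order than the mean. Concretely, Chebyshev's inequality gives, for every $\delta>0$,
$$P\Big(\big|\,|A_n|-E(|A_n|)\big|>\delta\,E(|A_n|)\Big)\le\frac{\mathrm{Var}(|A_n|)}{\delta^2\,(E(|A_n|))^2}\le\frac{1}{\delta^2\,E(|A_n|)}\to 0$$
as $n\to\infty$. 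Hence $|A_n|/E(|A_n|)\to1$ in probability, and combining this with $E(|A_n|)\sim a^{-1}\ln n$ yields $a|A_n|/\ln n\to1$ in probability.

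Because the summands are bounded and independent, the variance is automatically controlled by the mean, so there is no genuine dependence or combinatorial obstacle; the argument is essentially a weak law for a triangular array of independent indicators. The only points needing care are (i) checking that the $O(1)$ corrections to the mean—Euler's constant and the convergent series $\frac1a\sum_k(1-a)^k/k$—are negligible against the leading term $a^{-1}\ln n$, and (ii) noting that Chebyshev delivers \emph{relative} concentration precisely because $(E(|A_n|))^2$ dominates $\mathrm{Var}(|A_n|)$ by a factor of order $\ln n$. The main thing I would flag to a reader is thus not a difficulty in the proof but the normalization: the growth rate established above forces the scaling $a^{-1}\ln n$, and the proof proves convergence of $|A_n|$ divided by that quantity to $1$.
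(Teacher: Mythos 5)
Your proof is correct and takes essentially the same route as the paper's: Chebyshev's inequality applied to the sum of independent indicators $\mathbf 1_{\{G_k\ge n-k\}}$, with the variance (of order $\ln n$, which you bound cleanly by $\mathrm{Var}(|A_n|)\le E(|A_n|)$) dominated by the square of the mean. You also correctly flag the normalization issue: the paper's own computation gives $E(|A_n|)\sim a^{-1}\ln n$, yet its proof asserts $E(|A_n|)/(a\ln n)\to 1$, so the statement's scaling $a\ln n$ is indeed a slip for $a^{-1}\ln n$, and the version you prove, $a|A_n|/\ln n\to 1$ in probability, is the consistent one.
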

\begin{proof}
    By Chebyshev's inequality, for any $\epsilon>0$,
    \begin{equation}
     P\left(\left |\frac{|A_n|}{a\ln n}-\frac{E(|A_n|)}{a\ln n}\right|>\epsilon\right)\leq \frac{Var(|A_n|)}{a^2\ln^2 n} 
    \end{equation}
    
    Since $|A_n|$ is a sum of independent Bernoulli random variables ${\bf 1}_{\{G_k\geq n-k\}}$ for $0\leq k\leq n-1$, the variance of $|A_n|$ is the sum of the variances of these Bernoulli random variables. Note that
    $$E\left({\bf 1}_{\{G_k\geq n-k\}}\right)=\frac{1}{(n-k)a}\left(1-(1-a)^{n-k}\right).$$
    Thus, $Var(|A_n|)$ is of order $\frac{1}{a}\ln n$. Hence, by letting $n$ go to infinity in (1) we obtain that
    $$\left |\frac{|A_n|}{a\ln n}-\frac{E(|A_n|)}{a\ln n}\right|$$
    converges in probability to 0. Since $\frac{E(|A_n|)}{a\ln n}$ converges to 1 the result follows.
\end{proof}

\section{Related models.}

The present is a variation on the so-called catastrophe models that go back to at least the 1980's, see for instance \cite{Brockwell} and \cite{Brockwell et al.}. Recently, work has been done on stationary distributions in the case when $c$ is a constant, see \cite{Ben-Ari} and \cite{GH}. See also \cite{BS} for a model where $c$ is a function of the population size.

Closely related to the present model is the following model for population size which was introduced in \cite{FMS}. Consider a sequence of independent identically distributed random vectors, $(c_1,Z_1),(c_2,Z_2),\dots$ The $Z$ distribution is discrete with support on the set of natural numbers $\N$.
The $c$ distribution is continuous with support on $(0,1)$.
The size $X_n$ of the population at time $n\geq 0$ evolves as follows. Set $X_0=1$ and $B_0=0$. For $n\geq 1$, let
$$X_n=B_{n-1}+Z_n,$$
where the conditional distribution of $B_{n-1}$ given $(c_1,Z_1),\dots, (c_{n},Z_{n}),$ is distributed according to a binomial distribution with parameters $X_n$ and $1-c_{n}$. 

It is proved in \cite{FMS} that the Markov chain $(X_n)$ is positive recurrent if and only if $E(\ln Z)<+\infty$. A similar result is proved in \cite{Neuts} for a closely related model. As a consequence of this result we make the following two observations, see also Figure 2.

$\bullet$ In the case of constant death probability and $Z\equiv 1$ the Markov chain $(X_n)$ is positive recurrent.  

$\bullet$ Random environment and $Z\equiv 1$. Assume that the sequence $(c_n)$ of independent random variables is sampled from a fixed probability distribution on $(0,1)$. If we take the same death probability $c_n$ for all individuals at time $n$ then the Markov chain $(X_n)$ is positive recurrent.

Recurrence for a population means that its size will be down to one individual infinitely often. In fact, were it not for the reflecting barrier at 1 the population would get extinct with probability one. Thus, the two particular cases above seem to show that it is really the randomness of each individual $c$ that makes $(|A_n|)$ go to infinity in the present model (under $E(1/C)=+\infty$). 
Genetic diversity does help survival!

\section{The point process limit}

The random set
\begin{equation*}
  \Tilde A_\infty=\{(k,c_k): k\geq 0 \mbox{ and } G_k\geq k\}, 
  \end{equation*}
is the limit of the point process $(\tilde A_n)$ in the following sense.
We represent $(\tilde A_n)$ as a point measure as follows:
%Observe that for all $n\geq 0$, the probability distribution of $\tilde A_n$ can be written as
$$\tilde A_n \stackrel{\rm d}{=}\sum_{k=0}^n {\bf 1}_{\{G_k\geq k\}}\delta_{k,c_k},$$
where $\delta_{k,c_k}$ is a point mass on $(k,c_k)$. Hence, $\tilde A_n$ converges in distribution to 
$$\tilde A_\infty %\stackrel{\rm d}
{=}\sum_{k=0}^\infty {\bf 1}_{\{G_k\geq k\}}\delta_{k,c_k}.$$

In the next two subsections we study properties of $\Tilde A_\infty$.

\subsection{An accumulation point}

The following result shows that individuals who survive long enough have corresponding $c$'s that accumulate near 0.

\begin{thm} 
\label{thm:acc}
Under the assumption $E(1/C)=+\infty$ the random set of points $\tilde A_\infty$ has a unique accumulation point at 0.  
\end{thm}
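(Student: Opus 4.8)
The plan is to pass from the point set $\tilde A_\infty\subset\mathbb N\times(0,1)$ to the set of surviving genetic markers
$$S=\{c_k:\ k\geq 0,\ G_k\geq k\},$$
and to show that $0$ is the unique accumulation point of $S$ in $[0,1]$; this is the precise sense in which the $c$'s accumulate near $0$. Since the first coordinates $k$ of the points of $\tilde A_\infty$ are distinct and unbounded, any accumulation phenomenon lives entirely in the $c$-direction, so this reduction loses nothing.

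The engine of the whole argument is a single tail estimate together with the two Borel--Cantelli lemmas. For fixed $\delta>0$,
$$P(G_k\geq k,\ c_k>\delta)=E\big[(1-C)^{k-1}{\bf 1}_{\{C>\delta\}}\big]\leq(1-\delta)^{k-1},$$
whose sum over $k$ is finite. Hence, by the first Borel--Cantelli lemma (no independence needed), almost surely only finitely many surviving individuals have $c_k>\delta$. Intersecting over a sequence $\delta_m\downarrow 0$ --- a countable intersection of full-measure events --- I obtain a single almost sure event on which, for every $b>0$, choosing $\delta_m<b$ makes the set $\{c\in S:\ c>\delta_m\}$ finite; since this set contains every point of $S$ in the neighborhood $(\delta_m,1]$ of $b$, no $b\in(0,1]$ can be an accumulation point of $S$.

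It remains to show that $0$ itself is an accumulation point. Here I would invoke Theorem 1(2): since $E(1/C)=\infty$, almost surely $S$ is infinite. On the almost sure event above, for each $\epsilon>0$ pick $\delta_m<\epsilon$; only finitely many survivors have $c_k>\delta_m$, so infinitely many survivors have $c_k\leq\delta_m<\epsilon$. As $\epsilon$ was arbitrary, every neighborhood of $0$ contains infinitely many points of $S$, so $0$ is an accumulation point. (Alternatively, one can argue directly via the second Borel--Cantelli lemma: the events $\{G_k\geq k,\ c_k<\epsilon\}$ are independent, and $\sum_k P(G_k\geq k,\ c_k<\epsilon)=E[(1/C){\bf 1}_{\{C<\epsilon\}}]$, which is infinite because $E(1/C)=\infty$ while $E[(1/C){\bf 1}_{\{C\geq\epsilon\}}]\leq 1/\epsilon$.) Combining the two steps yields that $0$ is the unique accumulation point, as asserted.

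The only genuinely delicate point is the bookkeeping of null sets: the ``finitely many survivors above $\delta$'' statement must be organized over a countable family of thresholds $\delta_m\downarrow 0$, so that uniqueness holds simultaneously for all $b>0$ on one almost sure event rather than on a separate event for each $b$. The probabilistic content --- that survival to a large time $k$ forces $c_k$ to be of order $1/k$, so large markers are exponentially unlikely to persist --- is captured entirely by the elementary bound $P(G_k\geq k,\ c_k>\delta)\leq(1-\delta)^{k-1}$, and no further estimation is required.
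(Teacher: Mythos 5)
Your proof is correct and follows essentially the same route as the paper: the first Borel--Cantelli lemma applied to $\{G_k\geq k,\ c_k>\delta\}$ rules out accumulation points in $(0,1]$, and the infinitude of $\tilde A_\infty$ under $E(1/C)=+\infty$ (Theorem 1) forces accumulation at $0$. The only cosmetic differences are that you bound the summands by the geometric term $(1-\delta)^{k-1}$ where the paper evaluates $\sum_i\int_b^1(1-x)^{i-1}f(x)\,dx=\int_b^1 x^{-1}f(x)\,dx\leq 1/b$ (so your bound does not require $C$ to have a density), and that you make explicit the countable intersection over thresholds $\delta_m\downarrow 0$, which the paper leaves implicit.
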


We have the following consequence of this result. For all $n\geq 0$, $A_n$ has the same distribution as $\tilde A_n$. Hence, $(A_n)$ converges in distribution to $\tilde A_\infty$. Thus, the limiting 
distribution of $(A_n)$ exists and has a unique accumulation point at 0.

\begin{proof}
We start with an auxiliary result.
\begin{lem}
\label{lem:acc}
    Let $0<b<1$. Then, almost surely there exists a natural number $n$ such that if $i\geq n$ and $G_i\geq i$ then $c_i<b$.
\end{lem}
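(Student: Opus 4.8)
The plan is to rephrase the lemma as a statement about finitely many ``bad'' indices and then invoke the first Borel--Cantelli lemma. The conclusion---that there is an $n$ such that for all $i\geq n$, $G_i\geq i$ forces $c_i<b$---is logically equivalent to saying that the event
$$D_i=\{G_i\geq i \mbox{ and } c_i\geq b\}$$
occurs for only finitely many $i$. So I would set up the argument by showing $\sum_{i} P(D_i)<+\infty$, from which the almost sure finiteness of the set $\{i:D_i \text{ occurs}\}$ follows immediately.

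To estimate $P(D_i)$, I would condition on $c_i$. Since the conditional distribution of $G_i$ given $c_i$ is geometric with parameter $c_i$ and support on the positive integers, we have $P(G_i\geq i\mid c_i)=(1-c_i)^{i-1}$, exactly as used in the proof of Theorem 1. Hence
$$P(D_i)=E\left({\bf 1}_{\{C\geq b\}}(1-C)^{i-1}\right).$$
The decisive observation is that on the event $\{C\geq b\}$ one has $1-C\leq 1-b$, so the random base $1-c_i$ can be replaced by the deterministic constant $1-b$, yielding the uniform bound $P(D_i)\leq (1-b)^{i-1}$ with no further reference to the law of $C$.

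Because $0<1-b<1$, the geometric series $\sum_{i\geq 1}(1-b)^{i-1}$ converges, so $\sum_i P(D_i)<+\infty$. The first Borel--Cantelli lemma then gives that almost surely only finitely many of the $D_i$ occur, which is precisely the statement of the lemma.

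The only step requiring any care---and it is mild---is the decoupling bound $P(D_i)\leq(1-b)^{i-1}$; everything else is routine. It is worth noting that this argument uses neither the hypothesis $E(1/C)=+\infty$ nor any special feature of $C$ beyond its support being contained in $(0,1)$, so the lemma in fact holds for an arbitrary such distribution. This is the reason the lemma can serve as the engine for \emph{both} the accumulation statement of Theorem~\ref{thm:acc} and, read across all rational $b$, for pinning the accumulation point precisely at $0$.
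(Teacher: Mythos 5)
Your proof is correct and follows essentially the same route as the paper: both reduce the lemma to showing $\sum_i P(G_i\geq i,\ c_i\geq b)<+\infty$ and then invoke the first Borel--Cantelli lemma. The only difference is the summability estimate --- the paper (assuming $C$ has a density $f$) sums the geometric series in $i$ first to obtain $\int_b^1 x^{-1}f(x)\,dx\leq 1/b$, whereas your termwise bound $P(D_i)\leq(1-b)^{i-1}$ is slightly more elementary and, as you note, requires no density assumption on $C$.
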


Under $E(1/C)=+\infty$, the random set $\Tilde A_\infty$ has almost surely infinitely many points.
By Lemma \ref{lem:acc}, for any $0<b<1$ there are only finitely many points in $\tilde A_\infty$ which are above $b$. Therefore, there are infinitely many points below $b$. Thus, there is a unique accumulation point for the $c's$ at 0. This proves Theorem \ref{thm:acc}.

We now prove Lemma \ref{lem:acc}. Let $f$ be the probability density function of the random variable $C$. Then,
\begin{align*}
P(G_i\geq i, c_i>b)=&\int_b^1 (1-x)^{i-1}f(x)dx
\end{align*}

Thus,
\begin{align*}
\sum_{i\geq 1} P(G_i\geq i, c_i>b)=&\sum_{i\geq 1}\int_b^1 (1-x)^{i-1}f(x)dx\\
=&\int_b^1 \frac{1}{x}f(x)dx\\
\leq &\frac{1}{b}\\
<&+\infty
\end{align*}

Therefore, by the Borel-Cantelli Lemma there exists a.s. $n$ such that if $i\geq n$ either $G_i<i$ or $c_i<b$. The result follows.

\end{proof}

\subsection{ Convergence to a Poisson process}
Next we show that, for Uniform and Power Law type $C$ distributions the limit process properly rescaled converges to a non-homogeneous Poisson process.
\begin{thm}
Assume that $C$ is either uniformly distributed in $(0,1)$ or that the probability density function of $C$ is $f(x)=(1-\alpha)x^{-\alpha}$ on $(0,1)$ where $0<\alpha<1$. Then, the random set of points $\Tilde A_\infty$ properly rescaled converges in distribution to either a single Poisson process on $[0,+\infty)^2$, in the former case, or to a collection of independent Poisson processes on $[0,+\infty)\times{\mathbb R}$, in the latter case.
\end{thm}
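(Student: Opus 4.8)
The plan is to view $\tilde A_\infty=\sum_{k\ge 0}{\bf 1}_{\{G_k\ge k\}}\delta_{(k,c_k)}$ as an \emph{independent superposition} over birth times $k$ of point processes $N_k:={\bf 1}_{\{G_k\ge k\}}\delta_{(k,c_k)}$, each of which carries at most one point. After applying a deterministic rescaling map $T_n$ (pushing the observation window toward the accumulation point $c=0$ produced by Theorem~\ref{thm:acc}), I would study $N_n:=\sum_{k}{\bf 1}_{\{G_k\ge k\}}\delta_{T_n(k,c_k)}$ and prove that $N_n$ converges in distribution to a Poisson process with a diffuse intensity $\mu$. For this I would use Kallenberg's theorem: since the candidate limit is Poisson (hence simple, and determined by its void probabilities), it suffices to exhibit a dissecting ring of bounded rectangles $B$ — all $\mu$-continuity sets, because $\mu$ will be absolutely continuous — such that $E(N_n(B))\to\mu(B)$ and $P(N_n(B)=0)\to e^{-\mu(B)}$.

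The point of the independence structure is that the void probabilities come essentially for free. Writing $p_{k,n}(B)=P(T_n(k,c_k)\in B,\ G_k\ge k)$, independence across $k$ gives $P(N_n(B)=0)=\prod_k\bigl(1-p_{k,n}(B)\bigr)$. If I can establish the \emph{infinitesimality} bound $\max_k p_{k,n}(B)\to 0$ together with the intensity convergence $\sum_k p_{k,n}(B)\to\mu(B)$, then $-\log\prod_k(1-p_{k,n}(B))=\sum_k p_{k,n}(B)+O\!\bigl(\max_k p_{k,n}(B)\cdot\sum_k p_{k,n}(B)\bigr)\to\mu(B)$, delivering the required void probability. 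Infinitesimality should hold because each retained point has a diffuse mark $c_k$, so under the rescaling it falls into a $c$-interval of vanishing mass; hence no single birth time can contribute to $B$ with non-negligible probability. Everything therefore reduces to the mean-measure computation.

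For that computation I would start from $E(N_n(B))=\sum_{k\ge 1}\int_0^1{\bf 1}_{\{T_n(k,x)\in B\}}(1-x)^{k-1}f(x)\,dx$ and use that on the relevant range the surviving points have $x\to 0$ with $kx=O(1)$, so $(1-x)^{k-1}=\exp(-kx+O(kx^2))$ may be replaced by $e^{-kx}$. The guiding computation is that the change of variables $(k,c)\mapsto(kc,\,g(c))$ diagonalizes the intensity: setting $q=kc$ produces the factor $e^{-q}\,dq$ in the first coordinate, while $g$ is chosen to turn the marginal intensity $\tfrac1x f(x)\,dx$ of the $c$'s (computed as in Lemma~\ref{lem:acc}) into Lebesgue measure, namely $g(c)=\ln(1/c)$ for the uniform law and $g(c)=c^{-\alpha}$ for the power law. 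The sum over $k$ then becomes a Riemann sum with spacing of order $c\to 0$, converging to $\int e^{-q}\,dq$, and the limiting intensity $\mu$ factors as (exponential in $q$)$\times$(Lebesgue in $g$). The shift that centers the window at $c=0$ sends the range of $g$ to $(0,\infty)$ in the uniform case — giving the Poisson process on $[0,\infty)^2$ — and to all of $\mathbb R$ in the power-law case — giving the state space $[0,\infty)\times\mathbb R$.

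The hard part will be twofold. First, I must control the two approximations (the replacement of $(1-x)^{k-1}$ by $e^{-kx}$ and the Riemann-sum passage) \emph{uniformly} in $k$, and check that the boundary regime of small $k$ and moderate $c$, where these approximations are poor, maps outside any fixed $B$ after rescaling and so contributes nothing in the limit. Second, and more delicate, is pinning down the correct scaling in the power-law case and justifying that the limit is a \emph{collection of independent} Poisson processes rather than a single one: this amounts to showing that in the chosen coordinates $\mu$ decomposes into pieces supported on disjoint scale-blocks and that the corresponding parts of $N_n$ decouple. The decoupling is inherited from the independence of the $N_k$ across disjoint sets of birth times $k$, but matching that pre-limit independence to the right block decomposition of $[0,\infty)\times\mathbb R$ is where the real work lies. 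Once intensity convergence and infinitesimality are established on a dissecting ring, Kallenberg's theorem yields the claimed convergence in distribution.
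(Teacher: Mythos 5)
Your plan is correct and, at its core, performs the same reduction as the paper: exploit that $\tilde A_\infty$ is a superposition over birth times $k$ of independent point processes each carrying at most one point, so that Poisson convergence reduces to (i) convergence of the mean measure $\sum_k p_{k,n}(B)\to\mu(B)$ and (ii) uniform negligibility $\max_k p_{k,n}(B)\to 0$. The paper reaches exactly this reduction through the Laplace functional --- equation (\ref{loglaplace}) is your expansion $-\log\prod_k\bigl(1-p_{k,n}(B)\bigr)=\sum_k p_{k,n}(B)+o(1)$, with (\ref{approx}) playing the role of your infinitesimality condition --- whereas you invoke void probabilities and Grigelionis-type convergence of null arrays; that difference is essentially cosmetic. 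Where you genuinely diverge is the choice of coordinates: the paper keeps linear windows ($k\in LI_j$ with $c\in\frac1L J_k$ in the uniform case, $c\in\frac aL+\frac1{L^\beta}J$ in the power-law case) and obtains the location-dependent intensities $e^{-sy}\,ds\,dy$ and $\frac{1-\alpha}{a^\alpha}e^{-as}\,ds\,dy$, while your map $(k,c)\mapsto(kc,g(c))$ diagonalizes the intensity into the single translation-invariant form $\mathrm{const}\cdot e^{-q}\,dq\,du$ in both cases; the two limits are images of one another under explicit diffeomorphisms, so either version proves the theorem (though note your uniform-case limit then lives on $[0,\infty)\times\mathbb{R}$ after the shift, not literally on $[0,\infty)^2$). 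Your coordinates buy a unified mean-measure computation (the $k$-sum over a window telescopes to $\frac{e^{-w}-e^{-z}}{x}$, in the same way the full sum gives $\frac1x$ in the proof of Lemma \ref{lem:acc}), and --- a point in your favour --- they implicitly produce $c$-windows of width of order $L^{-(1+\alpha)}$ in the power-law case, which is the exponent that actually makes the paper's stated limiting intensity finite and nonzero (the choice $\beta=1/(1-\alpha)$ written in the paper would drive the windowed mean to $0$; the computation forces $\beta=1+\alpha$). What your coordinates do not remove is the location parameter $a$ indexing where one zooms in, hence the need for a joint, multi-$a$ void-probability (or Laplace-functional) computation to establish independence of the limiting processes; you correctly flag this as the remaining work, and it is settled, as in the paper, by the asymptotic disjointness of the $c$-windows for distinct values of $a$ together with the independence over $k$.
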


\begin{proof} We will prove the result for $C$ uniformly distributed. Then we will explain how to modify the argument to prove the result when $C$ has probability density $f(x)=(1-\alpha)x^{-\alpha}$.\\
\hfill\\
\noindent
{\bf Uniform density.} Let $n\geq 1$ and $m\geq 1$ and consider two collections of disjoint finite intervals $J_k=(a_k,b_k)$ for $k=1,\dots,n$ and $I_j$ for $j=1,\dots,m$ in the positive real numbers.
Let $L>0$ be a fixed natural number and define for every $j$ and $k$, $I_j^{(L)}=LI_j$ and $J_k^{(L)}=\frac{1}{L}J_k.$
For every $(j,k)$, let $N^{(L)}(j,k)$ be the number of points of $\Tilde A_\infty$ that belong to the set $I_j^{(L)}\times J_k^{(L)}$.
That is,
$$N^{(L)}(j,k)=\sum_{\ell \in I_j^{(L)}}F^{(L)}(k,\ell),$$
where $F^{(L)}(k,\ell)$ is the indicator function of the set
$\{G_\ell>\ell,c_\ell\in J_k^{(L)}\}.$
We will compute the limit of the following Laplace transform. Let  ${\bf t}=(t(j,k))_{j,k}$ be an array of positive real numbers. Define the Laplace transform $M^{(L)}$ by,
$$M^{(L)}({\bf t})={\mathbb E}\left[\exp\left(-\sum_{j,k}t(j,k)N^{(L)}(j,k)\right)\right].$$
Then,
\begin{align*}
M^{(L)}({\bf t})=&
{\mathbb E}\left[\exp\left(-\sum_{j,k}t(j,k)\sum_{\ell \in I_j^{(L)}}F^{(L)}(k,\ell)\right)\right]\\
=&{\mathbb E}\left[\prod_j\prod_{\ell\in I_j^{(L)}}\exp\left(-\sum_k t(j,k)F^{(L)}(k,\ell)\right)\right]\\
\end{align*}
Note now that for $\ell_1\not=\ell_2$, $F^{(L)}(k,\ell_1)$ and $F^{(L)}(k,\ell_2)$ are independent. Hence,
\begin{align*}
M^{(L)}({\bf t})=&\prod_j\prod_{\ell\in I_j^{(L)}}{\mathbb E}\left[\exp\left(-\sum_k t(j,k)F^{(L)}(k,\ell)\right)\right]\\
\end{align*}
Observe now that for all $(k,\ell)$, $F^{(L)}(k,\ell)$ converges to $0$ almost surely as $L$ goes to infinity. Hence by the Dominated Convergence Theorem,
\begin{equation}
\label{approx}
\lim_{L\to\infty}\left(1-{\mathbb E}\left[\exp\left(-\sum_k t(j,k)F^{(L)}(k,\ell)\right)\right]\right)=0. 
\end{equation}
Thus,
$$\ln\left(1-\left(1-{\mathbb E}\left[\exp\left(-\sum_k t(j,k)F^{(L)}(k,\ell)\right)\right)\right]\right)$$
can be approximated by
$$ -\left(1-{\mathbb E}\left[\exp\left(-\sum_k t(j,k)F^{(L)}(k,\ell)\right)\right]\right),$$
as $L$ goes to infinity.
Therefore,

\begin{align*}
\ln M^{(L)}({\bf t})=&\sum_j\sum_{\ell\in I_j^{(L)}}\ln\left({\mathbb E}\left[\exp\left(-\sum_k t(j,k)F^{(L)}(k,\ell)\right)\right]\right)\\
=&\sum_j\sum_{\ell\in I_j^{(L)}}\ln\left(1-{\mathbb E}\left[1-\exp\left(-\sum_k t(j,k)F^{(L)}(k,\ell)\right)\right]\right)\\
\sim &-\sum_j\sum_{\ell\in I_j^{(L)}}{\mathbb E}\left[1-\exp\left(-\sum_k t(j,k)F^{(L)}(k,\ell)\right)\right]\\
\end{align*}
Note that for fixed $\ell$, the indicators $(F^{(L)}(k,\ell))_k$ are all $0$ except possibly for one $k$. Hence,
$$1-\exp\left(-\sum_k t(j,k)F^{(L)}(k,\ell)\right)=\sum_k \left(1-\exp(-t(j,k)\right)F^{(L)}(k,\ell).$$
Thus, as $L$ goes to infinity
\begin{equation}
\label{loglaplace}
\ln M^{(L)}({\bf t})\sim \sum_{j,k} \left(1-\exp(-t(j,k)\right)\sum_{\ell\in I_j^{(L)}}{\mathbb E}\left(F^{(L)}(k,\ell)\right).
\end{equation}
Note that the proof up to equation (\ref{loglaplace}) is valid for any probability density $f$ provided the limit in equation (\ref{approx}) holds true. Recall that we are presently dealing with the case $f(x)=1$ for $0<x<1$. Thus, for fixed $j$ and $k$,
\begin{align*}
\sum_{\ell\in I_j^{(L)}}{\mathbb E}\left(F^{(L)}(k,\ell)\right)=&\sum_{\ell\in I_j^{(L)}}\int_{x\in J_k^{(L)}}(1-x)^{\ell}f(x)dx\\
\end{align*}
Let $I_j^{(L)}=L[w_j,z_j]$ where $w_j$ and $z_j$ are positive integers. Recall that $J_k^{(L)}=\frac{1}{L}(a_k,b_k)$. Therefore,
\begin{align*}
\sum_{\ell\in I_j^{(L)}}{\mathbb E}\left(F^{(L)}(k,\ell)\right)=&\int_{a_k/L}^{b_k/L}\frac{1}{x}\left((1-x)^{Lw_j}-(1-x)^{Lz_j+1}\right)dx\\
=&\int_{a_k}^{b_k}\frac{1}{y}\left((1-\frac{y}{L})^{Lw_j}-(1-\frac{y}{L})^{Lz_j+1}\right)dy\\
\end{align*}
Observe that,
$$\left|(1-\frac{y}{L})^{Lw_j}-(1-\frac{y}{L})^{Lz_j+1}\right|\leq \exp(-yw_j)+\exp(-yz_j)
$$
Hence, by the Dominated Convergence Theorem,
\begin{align*}
\lim_{L\to\infty}\sum_{\ell\in I_j^{(L)}}{\mathbb E}\left(F^{(L)}(k,\ell)\right)=&
\int_{a_k}^{b_k}\frac{1}{y}\left(\exp(-yw_j)-\exp(-yz_j)\right)dy\\
=&\int_{a_k}^{b_k}\frac{1}{y}\int_{w_j}^{z_j} y\exp(-sy)dsdy\\
=&\int_{a_k}^{b_k}\int_{w_j}^{z_j}\exp(-sy)dsdy.
\end{align*}
Then,
$$\lim_{L\to\infty}\ln M^{(L)}({\bf t})=\sum_{j,k} \left(1-\exp(-t(j,k)\right)
\int_{a_k}^{b_k}\int_{w_j}^{z_j}\exp(-sy)dsdy.$$
This proves the following convergence in distribution.  Writing 
$$
\tilde A^{(L)}_\infty(B) = \tilde A_\infty([B]_L) 
$$
for any given Borel set $B$ of $[0,+\infty)^2$, where $L>0$ is a scaling parameter,
and $[B]_L=\{(Ls,\frac1Ly):(s,y)\in B\}$,
we have that, as $L\to\infty$, $\tilde A^{(L)}_\infty$ converges in distribution
%\sum_{k=0}^\infty {\bf 1}_{\{G_(\lceil\geq k\}}\delta_{k,c_k}.$$
to a non homogeneous Poisson process on $[0,+\infty)^2$ with intensity measure $\exp(-sy)dsdy$. % for $s>0$ and $y>0$.\\ 

\hfill\\
\noindent 
{\bf Power law density.} Assume that for $x\in (0,1)$, $f(x)=(1-\alpha)x^{-\alpha}$ for a fixed $\alpha$ in $(0,1)$. This proof parallels the preceding one with a different rescaling that we now define. Let $L>0$ be a fixed natural number. Fix an arbitrary $m\geq1$; for $j=1,\ldots,m$, 
let $I_j^{(L)}=L[w_j,z_j]$ where $w_j$ and $z_j$ are positive integers
such that $[w_j,z_j]$, $j=1,\ldots,m$ are disjoint intervals. Now, fix $n\geq1$ and choose $a_1,\ldots,a_n$ distinct positive numbers, and for each $i,k=1,\ldots,n$, define finite intervals $J_{ik}=(b^i_k,d^i_k)$ in the real numbers, which are disjoint in $k$ for every $i$. For $i,k=1,\dots,n$, let
%$J_k=(a+b_k,a+d_k)$ and
$$J_{ik}^{(L)}=(\frac{1}{L}a_i+\frac{1}{L^\beta}b^i_k,\frac{1}{L}a_i+\frac{1}{L^\beta}d^i_k),$$
where $\beta=1/(1-\alpha)$. Notice that $\beta>1$. The indicator functions $F^{(L)}(i,k;\ell)$ are defined as above (with $J_{ik}^{(L)}$ replacing $J_k^{(L)}$). It is easy to see that the limit in (\ref{approx}) holds true. Hence, the approximation leading to (\ref{loglaplace}) is still valid.
Then, a computation similar to the one above shows that for all $j$ and $i,k$
\begin{align*}
\lim_{L\to\infty}\sum_{\ell\in I_j^{(L)}}{\mathbb E}\left(F^{(L)}(i,k;\ell)\right)=&
\int_{b^i_k}^{d^i_k}\int_{w_j}^{z_j}\frac{1-\alpha}{a_i^\alpha}\exp(-a_i s)dsdy.
\end{align*}
Therefore, writing 
$$\tilde A_\infty^{(a,L)}(B)=\tilde A_\infty([\![ B ]\!]^a_L)$$ 
for any given Borel set $B$ of $[0,+\infty)\times{\mathbb R}$, where $a>0$ is a location parameter, $L>0$ is a scaling parameter, and $[\![ B ]\!]^a_L=\{(Ls,\frac1La+\frac1{L^\beta}y):(s,y)\in B\}$,
we have that, as $L\to\infty$, $\{\tilde A^{(a,L)}_\infty,a>0\}$ converges in distribution to a collection of independent non homogeneous Poisson process on $[0,+\infty)\times{\mathbb R}$, with marginal intensity measure  $\frac{1-\alpha}{a^\alpha}\exp(-s a)dsdy$.
%for $t>0$ and $s>0$.
\end{proof}

\bibliographystyle{amsplain}

\begin{figure}[ht]
	\includegraphics[width=10cm]{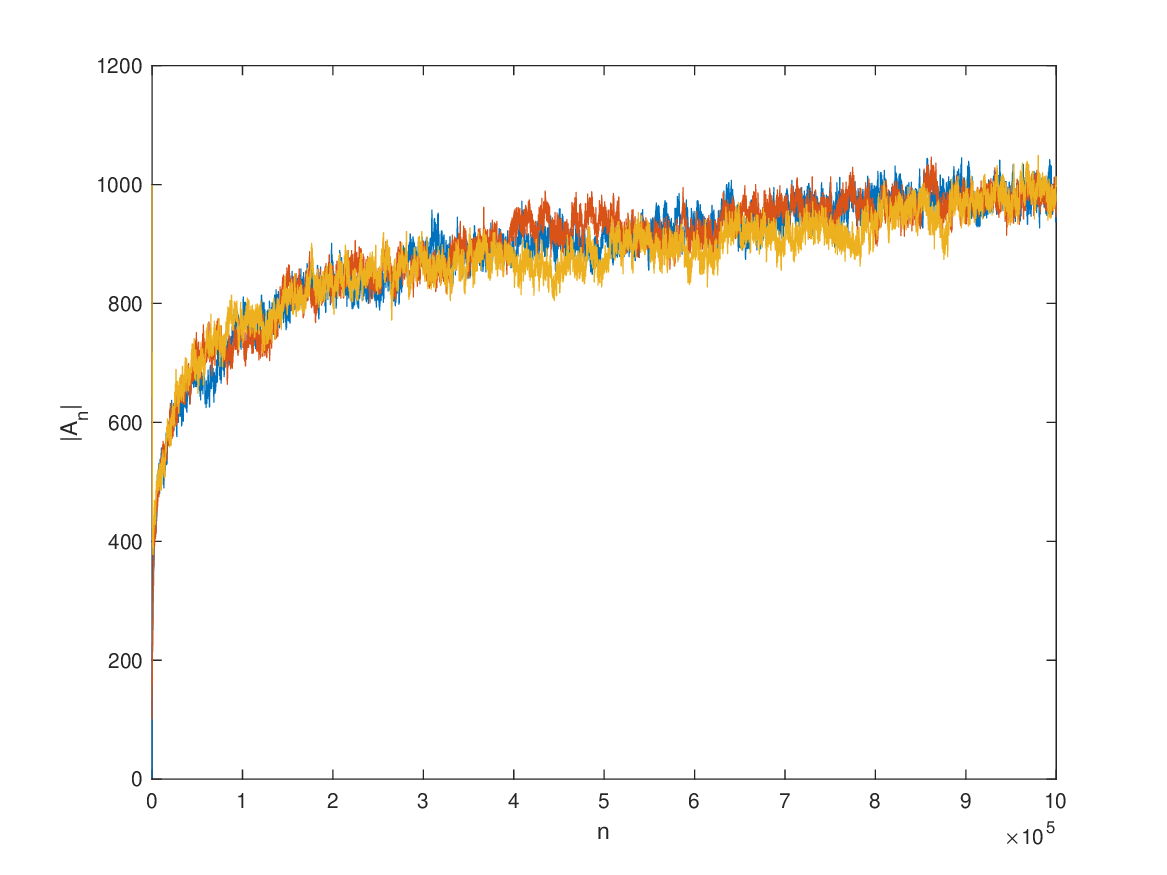}
 
\caption{These are 3 simulations of $(|A_n|)$ (i.e. the total size of the population $A_n$) starting with 1, 100 and 1000 individuals, respectively. The death probabilities are sampled from a uniform on $(0,a)$ with $a=0.01$. }
\end{figure}	

\vskip1cm 

\begin{figure}[ht]
	\includegraphics[width=8cm]{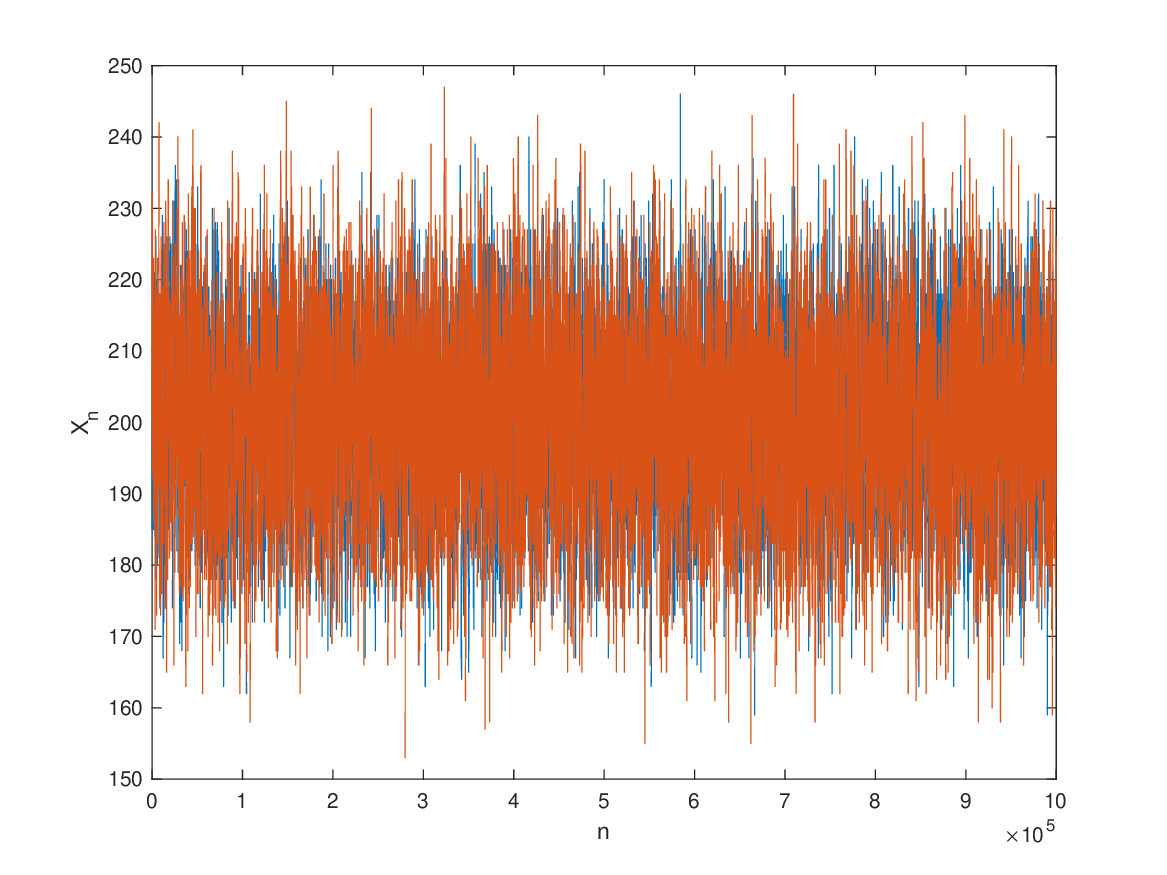}
\caption {These are simulations of the population size for two models from \cite{FMS}.  One model has constant $c=0.005$. The other model has a random environment with $c$ sampled from a uniform on $(0,a)$ with $a=0.01$. Both models are positive recurrent and their paths fluctuate in the same narrow strip. This in sharp contrast with what we see in Figure 1.}
	\end{figure}

\end{document}